\theoremstyle{plain}
\newtheorem{theorem}{\sc Theorem}[section]
\newtheorem{prop}[theorem]{\sc Proposition}
\newtheorem{lem}[theorem]{\sc Lemma}
\theoremstyle{definition}
\newtheorem{defn}[theorem]{\sc Definition}
\newtheorem{rem}[theorem]{\sc Remark}
\DeclareMathOperator{\im}{im}
\DeclareMathOperator{\R}{\mathbb{R}}
\newcommand{\abs}[1]{\left\lvert#1\right\rvert}
\newcommand{\norm}[1]{\left\lVert#1\right\rVert}
\title{Order of Contact and Ruled Submanifolds}
\date{\today}
\author{Igor Uljarevi\'{c}\\\\ University of Belgrade\\ Faculty of Mathematics\\\\ igoru@matf.bg.ac.rs}
\begin{document}
\maketitle{}

\begin{abstract}
We prove a generalization of the Monge-Cayley-Salmon theorem on osculation and ruled submanifolds using geometric measure theory.
\end{abstract}

\section{Introduction}

Analytic surfaces in $\R^3$ have the following remarkable property, that played a key role in the proof of the Erd\H{o}s distinct distances problem in dimension two by Guth and Katz \cite{guth2015erdHos}.

\begin{theorem}[Monge, Cayley, Salmon]\label{thm:MCS}
Let $M\subset \R^3$ be a proper $3$-dimensional analytic surface. Assume there exists a smooth family $\ell_x,x\in M$ of lines in $\R^3$ such that, for all $x\in M,$ $\ell_x$ and $M$ have a contact of order $3$ at $x.$ Then, $\ell_x\subset M$ for all $x\in M.$
\end{theorem}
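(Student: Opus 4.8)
The plan is to prove that $M$ is ruled by the given family, i.e.\ $\ell_x\subset M$ for every $x$, by producing the rulings \emph{locally} away from a degenerate locus and then propagating the conclusion by analyticity. \emph{Step 1 (local reformulation).} Fix $x_0\in M$ (assume $M$ connected, otherwise argue componentwise). Since $\ell_{x_0}$ has contact of order $\ge 2$ with $M$ at $x_0$, its direction lies in $T_{x_0}M$, so I choose coordinates $(u,v,z)$ centred at $x_0$ with $T_{x_0}M=\{z=0\}$ and write $M=\{z=f(u,v)\}$ near $x_0$, where $f(0)=0$, $\nabla f(0)=0$. For $p$ near $x_0$ the line $\ell_p$ is tangent to $M$, so it has direction $(a(p),b(p),\,af_u+bf_v)$ for a smooth nonvanishing planar field $(a,b)$ (smoothness comes from the hypothesis that $x\mapsto\ell_x$ is smooth); normalise $a^2+b^2\equiv 1$. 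With $F=z-f$ and $\ell_p(t)=p+t(a,b,af_u+bf_v)$, Taylor expanding shows that $\ell_p$ and $M$ having contact of order $3$ at $p$ — i.e.\ $F\circ\ell_p$ vanishing through degree $3$ — is equivalent to the two identities on a neighbourhood of $x_0$,
\[
W^2f:=a^2f_{uu}+2abf_{uv}+b^2f_{vv}\equiv 0,\qquad W^3f:=a^3f_{uuu}+3a^2bf_{uuv}+3ab^2f_{uvv}+b^3f_{vvv}\equiv 0,
\]
the partials being evaluated with $(a,b)$ frozen. The first says $(a,b)$ is an asymptotic direction at each point (and already forces the Gauss curvature $K\le 0$, since an elliptic point carries no real asymptotic direction); the second is the flecnodal condition.

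\emph{Step 2 (the key identity).} Apply the operator $W=a\partial_u+b\partial_v$ to $W^2f\equiv 0$. A direct computation splits the result as the frozen third derivative plus a remainder in $Wa,Wb$:
\[
0=W\!\left(W^2f\right)=W^3f+2\bigl(a\,(Wa)\,f_{uu}+(b\,Wa+a\,Wb)\,f_{uv}+b\,(Wb)\,f_{vv}\bigr).
\]
Since $W^3f\equiv 0$, the bracket vanishes identically. As $a^2+b^2\equiv 1$ we may write $(Wa,Wb)=\mu\,(-b,a)$, and the bracket becomes $\mu\bigl(-ab\,f_{uu}+(a^2-b^2)f_{uv}+ab\,f_{vv}\bigr)$. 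Hence at every point \emph{either} $\mu=0$, \emph{or} $(f_{uu},f_{uv},f_{vv})$ is annihilated both by $(a^2,2ab,b^2)$ and by $(-ab,a^2-b^2,ab)$, which forces the Hessian of $f$ to be proportional to $\binom{b}{-a}(b,-a)$: rank at most $1$, with $(a,b)$ in its kernel — i.e.\ a parabolic point of $M$ with $(a,b)$ its null direction.

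\emph{Step 3 (rulings at non-degenerate points).} Where $\mu\ne 0$, Step 2 gives $Wa=Wb=0$, so $(a,b)$ is constant along its integral curve $c(s)$ in the $(u,v)$-plane; thus $c$ is a straight segment, and $\tfrac{d^{2}}{ds^{2}}f(c(s))=(W^2f)(c(s))=0$, so $f$ is affine along $c$. Therefore $s\mapsto(c(s),f(c(s)))$ is a straight segment lying in $M$, through $p$ in the direction of $\ell_p$, hence a piece of $\ell_p$. Then $F|_{\ell_p}$ is real-analytic and vanishes on an interval, so it vanishes on all of $\{t:\ell_p(t)\in M\}$, which is closed because $M$ is proper and open by the identity theorem applied to a local analytic defining equation at each point reached; thus $\ell_p\subset M$.

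\emph{Step 4 (propagation; the main obstacle).} The set $\{\mu\ne 0\}$ contains the complement of the parabolic locus $\{K=0\}$. If $K\not\equiv 0$, then $\{K=0\}$ is a proper analytic subset of the connected analytic $M$, so $\{\mu\ne 0\}$ is dense, Step 3 gives $\ell_p\subset M$ on a dense set, and since $p\mapsto\ell_p$ is smooth and $M$ is closed, $\ell_p\subset M$ for all $p$. The remaining case $K\equiv 0$ is where I expect the real difficulty: $M$ is then developable, and on the open set $\{H\ne 0\}$ the classical local classification (cylinder, cone, tangent developable) exhibits $M$ as ruled by the null lines of its shape operator, which by Step 2 are exactly the $\ell_p$ and which lie in $M$; one must still treat the flat locus $\{H=0\}$ (by analyticity either all of $M$, so $M$ is a plane and the statement is trivial, or a proper analytic subset across which the rulings extend by closedness). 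So the crux is not the curvature computation of Step 2 — which is routine and is where the hypothesis does its work — but the structure theory required for the everywhere-parabolic case, together with the analytic-continuation bookkeeping (and uniform control of the family near the bad locus) needed in Steps 3–4.
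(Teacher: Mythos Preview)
Your approach is a classical differential-geometric one and is genuinely different from the paper's. The paper does not prove the Monge--Cayley--Salmon theorem directly; it deduces it as the case $n=3$, $m=2$, $k=1$ of Theorem~\ref{thm:generalization}, whose proof is measure-theoretic: one studies the $(m{+}1)$-volume swept by $\phi(x,t)=x+tv_1(x)+\cdots+t^kv_k(x)$, shows that the contact hypothesis forces this volume to be $o(t^{k(m+1)})$ (Proposition~\ref{prop:analyticlimit0}), observes that the volume integrand is built from polynomials in $t$ of degree at most $k(m{+}1)-1$ so the volume must vanish identically (Proposition~\ref{prop:notarbitrary}), and then uses a rank argument to conclude $\phi_t(x)\in M$ locally (Proposition~\ref{prop:contained}), with analyticity finishing. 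There is no curvature dichotomy and no developable classification anywhere. Your route via the flecnodal pair $W^2f=W^3f=0$ is more hands-on for surfaces and lines, but --- as you yourself flag --- it incurs a parabolic/planar case analysis that the paper's method sidesteps entirely, and it does not visibly generalise to higher codimension or to class-$k$ curves.

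There is one concrete slip: in Steps~3--4 you have $\mu=0$ and $\mu\neq 0$ interchanged. Since $(Wa,Wb)=\mu(-b,a)$, the condition $Wa=Wb=0$ is \emph{equivalent} to $\mu=0$; your Step~2 dichotomy reads ``$\mu=0$ or parabolic'', so it is the \emph{non}-parabolic locus $\{K<0\}$ that is contained in $\{\mu=0\}$, and it is there that the integral curves of $W$ straighten out into segments of the $\ell_p$. With that correction Steps~3--4 go through on $\{K<0\}$, and your density/closedness argument handles the case $K\not\equiv 0$. For $K\equiv 0$ you only sketch: on $\{H\neq 0\}$ the unique asymptotic direction is $(a,b)$ already from $W^2f=0$, so the rulings of the developable are the $\ell_p$; but the statement ``$K\equiv 0$ with rank-one second fundamental form implies locally ruled by straight asymptotic lines'' should be cited or proved, and the bridging across the planar set $\{H=0\}$ written out. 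None of this is fatal, but it is exactly the bookkeeping the paper's volume argument was designed to avoid.
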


A proof of Theorem~\ref{thm:MCS} can be found in \cite{salmon1865treatise}. In \cite{guth2018algebraic}, Guth and Zahl proved a version of Theorem~\ref{thm:MCS} for an arbitrary field instead of $\R.$ For a detailed exposition on the Monge-Cayley-Salmon theorem and its relation to the Erd\H{o}s distinct distances problem, we refer to \cite{katz2014flecnode} and \cite{tao2014blog}. The aim of the present paper is to prove the following generalization of Theorem~\ref{thm:MCS}. Along the line, we present  a novel elementary proof of the Monge-Cayley-Salmon theorem.

\begin{defn}\label{defn:classk}
A curve $\Gamma\subset \R^n$ is said to be of class $k\in \mathbb{N}$ if it can be parametrized by a map $\R\to\R^n$ whose coordinates are polynomial functions of degree at most $k.$
\end{defn}

In the terminology of Definition~\ref{defn:classk}, the lines are curves of class 1.

\begin{theorem}\label{thm:generalization}
Let $M\subset \R^n$ be a proper $m$-dimensional analytic submanifold. Assume there exists a smooth family $\Gamma_x,x\in M$ of class-$k$ curves in $\R^n$ such that, for all $x\in M,$ $\Gamma_x$ and $M$ have a contact of order $k(m+1)$ at $x.$ Then, $\Gamma_x\subset M$ for all $x\in M.$
\end{theorem}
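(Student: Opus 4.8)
The plan is to work locally and reduce the statement to a rank bound on a single smooth map, which will itself follow from an elementary comparison of a polynomial degree with a vanishing order.

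Fix $p\in M$. Choose an analytic parametrization $\psi\colon U\to M$ with $U\subseteq\R^m$ open and $\psi(u_0)=p$, and on a neighbourhood $\Omega$ of $p$ in $\R^n$ write $M\cap\Omega=f^{-1}(0)$ for an analytic submersion $f=(f_1,\dots,f_{n-m})\colon\Omega\to\R^{n-m}$. After an affine reparametrization in $t$ (smooth in the base point) I may assume the class-$k$ curves are $\gamma_u(t)=\sum_{i=0}^{k}a_i(u)t^i$ with $a_0=\psi$ and $a_i$ smooth in $u$. Introduce the sweep map $\Phi\colon U\times(-\varepsilon,\varepsilon)\to\R^n$, $\Phi(u,t)=\gamma_u(t)$. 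The crux will be to show $\operatorname{rank}d\Phi\le m$ on a neighbourhood of $(u_0,0)$. Granting this, note that $d\Phi$ has rank exactly $m$ at $(u_0,0)$ (its first $m$ columns are $\partial_{u_1}\psi(u_0),\dots,\partial_{u_m}\psi(u_0)$), so by lower semicontinuity the rank is constantly $m$ near $(u_0,0)$; the constant-rank theorem then makes the image of a neighbourhood a relatively open piece of an $m$-dimensional submanifold $S\subset\R^n$, and since $S$ contains the relatively open piece $\psi(U)$ of $M$, invariance of domain forces $S=M$ near $p$. Hence $\gamma_u(t)\in M$ for $u$ near $u_0$ and $|t|$ small; then $t\mapsto f(\gamma_u(t))$ is real-analytic (analytic composed with a polynomial) and vanishes on an interval, hence identically, and a short connectedness argument along $\Gamma_u$ — using that $M$ is closed, which is where properness is used — upgrades this to $\Gamma_u\subseteq M$. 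Since $p$ was arbitrary, the theorem follows.

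So everything reduces to: the $(m+1)$-vector $\omega(u,t):=\partial_{u_1}\Phi\wedge\cdots\wedge\partial_{u_m}\Phi\wedge\partial_t\Phi\in\Lambda^{m+1}\R^n$, whose coordinates are the $(m+1)\times(m+1)$ minors of $d\Phi$, vanishes identically near $(u_0,0)$. Two facts multiply together. First, for fixed $u$ the function $\omega(u,\cdot)$ is a \emph{polynomial} in $t$ of degree at most $k(m+1)-1$: the columns $\partial_{u_\alpha}\gamma_u(t)$ have $t$-degree $\le k$ and $\partial_t\gamma_u(t)$ has $t$-degree $\le k-1$, and a minor is multilinear in the columns. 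Second, $\omega$ \emph{vanishes to order $\ge k(m+1)$ in $t$ at $t=0$}. For this, set $\nu_j(u,t)=\nabla f_j(\gamma_u(t))$; shrinking $\Omega$ if necessary these stay independent, spanning an $(n-m)$-plane whose orthogonal complement $T(u,t)$ equals $T_{\psi(u)}M$ at $t=0$. Decompose each column of $d\Phi$ into its $T(u,t)$-component plus $\sum_j\lambda_{\bullet j}\nu_j$. The contact hypothesis says $h_j:=f_j\circ\gamma_u$ vanishes to order $\ge k(m+1)+1$ in $t$; since $\partial_{u_\alpha}\Phi\cdot\nu_j=\partial_{u_\alpha}h_j$ and $\partial_t\Phi\cdot\nu_j=\partial_t h_j$, inverting the (Gram) system shows the normal coefficients $\lambda_{\alpha\bullet}$ vanish to order $\ge k(m+1)+1$ and $\lambda_{(m+1)\bullet}$ to order $\ge k(m+1)$. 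Expanding $\omega=\bigwedge_{l}\bigl(\text{($T$-part)}+\sum_j\lambda_{lj}\nu_j\bigr)$ multilinearly, the all-$T$-parts term is a wedge of $m+1$ vectors lying in the $m$-plane $T(u,t)$, hence zero, while every remaining term carries a $\lambda$-factor and so vanishes to order $\ge k(m+1)$ — the minimum attained by the single term using $\lambda_{(m+1)\bullet}$. Comparing the two facts: a polynomial of degree $<k(m+1)$ vanishing to order $\ge k(m+1)$ is identically zero, so $\omega\equiv 0$.

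I expect the difficulty to reside less in any single step than in the bookkeeping: that $a_i(u)$ and the tangential/normal splitting can be chosen smoothly in $u$, that the $\nu_j$ remain independent off $M$, and — most delicate — the exact match of the degree bound $k(m+1)-1$ against the vanishing order $k(m+1)$, whose strictness is precisely what pins down the sharp contact exponent $k(m+1)$ (for $m=2$, $n=3$, $k=1$ this recovers the order-$3$ hypothesis of Theorem~\ref{thm:MCS}, where $\omega=\det d\Phi$ is a single scalar of degree $\le2$ forced to vanish to order $\ge3$ — the promised elementary proof). The concluding passage is where geometric measure theory is convenient in place of the constant-rank argument: the area formula gives $\mathcal H^{m+1}\bigl(\Phi(U\times(-\varepsilon,\varepsilon))\bigr)\le\int J_{m+1}\Phi=0$, so the sweep of the curves is $\mathcal H^{m+1}$-null, and one argues that a null smooth sweep of $m$-surfaces through $M$ cannot leave $M$.
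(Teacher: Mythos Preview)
Your argument is correct and takes a genuinely different route from the paper's. Both proofs hinge on the same observation: the $(m+1)$-vector $\omega(u,t)=\partial_{u_1}\Phi\wedge\cdots\wedge\partial_{u_m}\Phi\wedge\partial_t\Phi$ is, for each fixed $u$, a polynomial in $t$ of degree at most $k(m+1)-1$. The divergence is in how one forces $\omega\equiv 0$. The paper never analyses $\omega$ pointwise; instead it integrates to form the swept volume $Vol\left(\left.\phi\right|_{M\times(-t,t)}\right)$, shows via distance estimates that this is $o\bigl(t^{k(m+1)}\bigr)$ (Proposition~\ref{prop:analyticlimit0}), and only then concludes that the coefficients of $\omega$ vanish (Proposition~\ref{prop:notarbitrary}). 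That volume estimate is where the paper spends its effort and where analyticity of $M$ enters essentially: one reparametrizes so that $\partial_t\psi_t$ is normal to $M$, and then needs the coordinates of $t\mapsto \psi_t(x)-x$ to be monotone near $0$ (Lemma~\ref{lem:analyticmonotone}) to bound the arc length by the endpoint distance. Your tangential/normal decomposition using local defining functions $f_j$ bypasses all of this: the contact hypothesis directly gives the vanishing order of each $h_j=f_j\circ\gamma_u$ and its $u$-derivatives, the all-tangential term in the multilinear expansion dies for dimensional reasons, and every surviving term carries a normal coefficient of order at least $k(m+1)$. This is more elementary, makes the matching of degree against vanishing order completely transparent, and uses analyticity only at the very end for the identity-theorem step. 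For the passage from $\omega\equiv0$ to local containment, the paper uses the flow argument of Proposition~\ref{prop:contained} (write $\partial_t\phi_t=D\phi_t\,Y_t$ and integrate $-Y_t$), while your constant-rank plus invariance-of-domain argument reaches the same conclusion; the point that $\psi(U')$ is open in the image manifold $S$, so that the curve $t\mapsto\Phi(u_0,t)\in S$ must remain in $\psi(U')\subset M$ for small $t$, is exactly what is needed and is correct.
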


The proof of Theorem~\ref{thm:generalization} for $k=1$ and for analytic submanifolds of $\mathbb{C}^n$ or $\mathbb{C}P^n$ can be found in \cite{landsberg1999linear}. That proof uses techniques of algebraic geometry. 

The main idea of our proof is to consider the $(m+1)$-dimensional volume swept by $M$ as each point $x$ of $M$ moves along $\Gamma_x.$ It turns out (see Proposition~\ref{prop:contained} on page~\pageref{prop:contained}) that this volume is equal to 0 precisely when $\Gamma_x\subset M$ for all $x\in M.$ The order-of-contact condition, on the other hand, implies that the rate at which the volume is swept is sufficiently slow (see Proposition~\ref{prop:analyticlimit0} on page~\pageref{prop:analyticlimit0}). What bridges these two facts (the vanishing volume and the volume being swept at a sufficiently slow rate) is a result of a Weyl-tube-formula type (Proposition~\ref{prop:notarbitrary} on page \pageref{prop:notarbitrary}). Now, we sketch this step in the case of a hypersurface in $\R^n.$ The volume swept by the hypersurface is a polynomial in the time-variable $t$ of degree at most $k(m+1)=kn.$ If a polynomial of degree at most $kn$ grows slower than $t^{kn}$ as we approach 0, then it vanishes identically. 

\subsection*{Acknowledgments}

I would like to thank Filip Mori\'{c} for suggesting this project. I am grateful to Darko Milinkovi\'{c} for many useful discussions. This work was supported by the Ministry of Education, Science, and Technological development, grant number 174034.

\section{Preliminaries}

\subsection{The nearest point map}

\begin{defn}
Let $M\subset \R^n$ be a smooth submanifold, and let $N$ be a normal tubular neighbourhood of $M.$ The nearest point map $r:N\to M$ is the map that sends each point of $N$ to the unique nearest point in $M.$
\end{defn}

The nearest point map of a smooth submanifold is smooth \cite[page~109]{hirsch2012differential}. If the submanifold is analytic, then the nearest point map is analytic as well \cite[page~240]{federer2014geometric} .

\subsection{Order of contact}

\begin{defn}
Two smooth curves $\gamma_1,\gamma_2:\R\to  \R^n$ are said to have a contact of order $k\in\mathbb{N}\cup\{0\}$ at a point $t_0\in\R$ if
\[(\forall j\in\{0,1,\ldots,k\})\quad \gamma_1^{(j)}(t_0)=\gamma_2^{(j)}(t_0).\]
\end{defn}

\begin{defn}
A smooth curve $\gamma_1:\R\to\R^n$ is said to have a contact of order $k\in\mathbb{N}\cup \{0\}$ with a smooth submanifold $M\subset \R^n$ at a point $t_0\in \R$ if there exists a smooth curve $\gamma_2:\R\to M$ such that the curves $\gamma_1$ and $\gamma_2$ have a contact of order $k$ at $t_0.$
\end{defn}

\begin{defn}
Two submanifolds $M_1,M_2\subset \R^n$ are said to have a contact of order $k\in\mathbb{N}\cup\{0\}$ at a point $p\in M_1\cap M_2$ if for every smooth curve $\gamma_1:\R\to M_1$ such that $\gamma_1(0)=p$ there exists a smooth curve $\gamma_2:\R\to M_2$ such that $\gamma_1$ and $\gamma_2$ have a contact of order $k$ at 0.
\end{defn}

In the following lemma (and in the rest of the paper), we denote by $d(x, M)$ the distance between a point $x\in \R^n$ and a subset $M\subset\R^n, $ i.e.
\[d(x,M):=\inf\left\{\norm{x-y}\quad|\quad y\in M \right\}.\]

\begin{lem}\label{lem:uniform}
Let $M$ be a submanifold of $\R^n,$ let $I$ be an open interval containing 0, and let $\phi_t:M\to \R^n, t\in I$ be a smooth family of embeddings. Assume, for all $x\in M,$ the curve $t\mapsto \phi_t(x)$ has a contact of order $k\in\mathbb{N}\cup\{0\}$ with $M$ at $t=0.$ Then,
\[\lim_{t\to 0}\frac{d(\phi_t(x),M)}{t^k}=0\]
uniformly on compact subsets of $M.$
\end{lem}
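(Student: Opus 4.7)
My plan is to use the nearest point map of $M$ to produce a smooth, $x$-uniform replacement of $\phi_t(x)$ lying in $M$, and then to bound the resulting displacement via Taylor's theorem in $t$.

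First, fix a compact set $K\subset M$. The contact hypothesis forces $\phi_0(x)\in M$, so continuity of $\phi$ yields a tubular neighbourhood $N$ of $M$ with smooth nearest point map $r:N\to M$, and some $\delta>0$, such that $\phi_t(K)\subset N$ for all $|t|<\delta$. I would set $\psi_t(x):=r(\phi_t(x))\in M$, reducing the bound on $d(\phi_t(x),M)$ to estimating $h(x,t):=\phi_t(x)-\psi_t(x)$.

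The crux is then to show $\partial_t^j h(x,t)|_{t=0}=0$ for $j=0,1,\ldots,k$. For each fixed $x\in K$, the contact hypothesis furnishes a smooth curve $\gamma_x:\R\to M$ having the same $k$-jet as $t\mapsto\phi_t(x)$ at $t=0$. Since $r|_M=\op{id}_M$, we have $r\circ\gamma_x=\gamma_x$; and since $r$ is smooth, the iterated chain rule shows that the $k$-jet at $0$ of $r\circ\alpha$ is determined by the $k$-jet of $\alpha$ at $0$ together with the jet of $r$ at $\alpha(0)$. Feeding in $\alpha(t)=\phi_t(x)$ and $\alpha(t)=\gamma_x(t)$, which share these data, I obtain
\[ \partial_t^j\psi_t(x)|_{t=0}=\partial_t^j(r\circ\gamma_x)(0)=\gamma_x^{(j)}(0)=\partial_t^j\phi_t(x)|_{t=0} \qquad (j\le k), \]
which is the vanishing claim.

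Taylor's theorem with integral remainder in $t$ then gives
\[ h(x,t)=\int_0^t\frac{(t-s)^k}{k!}\,\partial_s^{k+1}h(x,s)\,ds, \]
and since $\partial_s^{k+1}h$ is continuous and thus bounded by some constant $C$ on the compact set $K\times[-\delta/2,\delta/2]$, I conclude $\norm{h(x,t)}\le C|t|^{k+1}/(k+1)!$ uniformly on $K$, whence $d(\phi_t(x),M)/t^k\to 0$ uniformly on $K$. The main obstacle is the jet-matching step: the curves $\gamma_x$ are not assumed to form a smooth family in $x$, so they cannot be substituted directly into a uniform estimate. The nearest point map sidesteps this by providing an $x$-smooth replacement $\psi_t(x)$ whose $k$-jet at $t=0$ is forced, by the chain-rule identity above, to coincide with that of $t\mapsto\phi_t(x)$.
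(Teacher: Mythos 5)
Your proof is correct and follows essentially the same route as the paper: project via the nearest point map onto $M$, observe that the first $k$ $t$-derivatives of the difference $\phi_t(x)-r(\phi_t(x))$ vanish at $t=0$ by the contact hypothesis, and conclude with a Taylor estimate whose $(k+1)$-st derivative bound is uniform on compact sets. The only difference is cosmetic: you spell out, via the chain-rule/jet argument, why those $k$ derivatives vanish, a step the paper asserts in one line; and you use the integral form of the remainder where the paper uses the Lagrange form coordinatewise.
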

\begin{proof}
Let $N$ be a normal tubular neighbourhood of $M,$ and let $r:N\to M$ be the nearest point map. Let $K\subset M$ be an arbitrary compact subset, and let $\delta>0$ be such that $[-\delta,\delta]\subset I$ and such that $\phi_t(x)\in N$ for $x\in K$ and $t\in[-\delta,\delta].$ Denote
\[C:=\max_{x\in K, \abs{t}\leqslant \delta}\norm{\partial_t^{k+1}(\phi_t(x)-r\circ \phi_t(x))}.\]
Fix $x\in M,$ and denote by $\gamma=(\gamma_1,\ldots,\gamma_n): I\to \R^n$ the curve defined by $\gamma(t)=\phi_t(x)-r(\phi_t(x)).$ Since $\phi_t(x)$ has a contact of order $k$ with $M,$ the derivatives of $\gamma$ up to order $k$ are equal to 0. The Taylor approximation implies
\[ \abs{\gamma_i(t)}\leqslant \frac{t^{k+1}}{(k+1)!} \cdot \max_{\abs{t}\leqslant\delta}\abs{\partial_t^{k+1}\gamma_i(t)} \leqslant t^{k+1}\cdot \frac{C}{(k+1)!},\]
for all $i\in \{1,\ldots, n\}$ and $t\in(-\delta,\delta).$ Hence
\[\frac{d(\phi_t(x),M)}{t^k}=\frac{\norm{\gamma(t)}}{t^k}\leqslant t\cdot\frac{\sqrt{n}\cdot C}{(k+1)!},\]
for all $x\in K$ and $t\in (-\delta,\delta).$ This finishes the proof.
\end{proof}

\subsection{Volume of a $C^1$ map}

The purpose of this section is to introduce the notion of a volume $Vol(\phi)$ of a $C^1$ map $\phi:M\to N$ from a smooth manifold $M$ to a Riemannian manifold $(N,g).$ Intuitively, the volume of a $C^1$ map $\phi:M\to N$ is the $(\dim M)-$dimensional volume swept by $\phi$ in $N.$ We will, actually, formally define only the volume of a $C^1$ map form an open subset of $\R^m$ to a Riemannian manifold. This definition extends to the general case via the standard trick which uses a collection of charts and a subordinate partition of unity.

\begin{defn}
Let $U\subset \R^m$ be an open subset, let $(N,g)$ be a Riemannian manifold, and let $\phi: U\to N$ be a $C^1$ map. The \emph{volume} of the map $\phi$ is defined by
\[Vol(\phi):=\int_{U}\norm{\partial_1\phi(x) \wedge \cdots \wedge \partial_m\phi(x) }dx.\]
\end{defn}

\begin{rem}
We found it convenient to use exterior algebra $\bigwedge^m T_{\phi(x)}N$ to express the volume element. In our conventions, if $e_1,\ldots, e_n$ is an orthonormal basis of $T_{\phi(x)}N,$ then $e_{i_1}\wedge\cdots\wedge e_{i_m}, 1\leqslant i_1<\cdots<i_m\leqslant n$ is an orthonormal basis of $\bigwedge^m T_{\phi(x)}N.$ Alternatively,   $\norm{\partial_1\phi(x) \wedge \cdots \wedge \partial_m\phi(x) }$ can be written as
\[\norm{\partial_1\phi(x) \wedge \cdots \wedge \partial_m\phi(x) }= \sqrt{\det\left[ g(\partial_i\phi(x), \partial_j\phi(x)) \right]_{i,j}}.\]
\end{rem}

\begin{lem}\label{lem:coordinates}
Let $\psi:V\to U$ be a $C^1$ diffeomorphism between two $m$-dimensional manifolds, let $(N,g)$ be a Riemannian manifold, and let $\phi:U\to N$ be a $C^1$ map. Then, $Vol(\phi\circ\psi)=Vol(\phi).$ 
\end{lem}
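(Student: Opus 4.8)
The plan is to reduce immediately to the defining case, in which $U$ and $V$ are open subsets of $\R^m$: this is the assertion that $Vol$ is unchanged under a change of variables, and it is precisely what makes the chart-and-partition-of-unity definition of $Vol$ for maps out of a manifold coherent, so the stated manifold version follows from it. With $U,V\subset\R^m$ open, the proof rests on two standard ingredients: the chain rule, to compute the partial derivatives of $\phi\circ\psi$ from those of $\phi$ and the Jacobian of $\psi$; and the transformation law for the top exterior power under a linear map, which converts that Jacobian into a scalar determinant.

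The first step is the chain rule, written coordinatewise:
\[\partial_i(\phi\circ\psi)(x)=\sum_{j=1}^m \frac{\partial\psi_j}{\partial x_i}(x)\,\partial_j\phi(\psi(x)),\qquad x\in V,\ i\in\{1,\dots,m\}.\]
The second step is the multilinear-algebra identity that if $w_1,\dots,w_m$ lie in a real vector space and $v_i=\sum_j a_{ij}w_j$, then $v_1\wedge\cdots\wedge v_m=\det[a_{ij}]\cdot(w_1\wedge\cdots\wedge w_m)$. Applying this with $w_j=\partial_j\phi(\psi(x))$ and $a_{ij}=\partial\psi_j/\partial x_i(x)$ gives
\[\partial_1(\phi\circ\psi)(x)\wedge\cdots\wedge\partial_m(\phi\circ\psi)(x)=\det\bigl(D\psi(x)\bigr)\cdot\bigl(\partial_1\phi\wedge\cdots\wedge\partial_m\phi\bigr)(\psi(x)),\]
and taking norms in $\bigwedge^m T_{\phi(\psi(x))}N$ yields
\[\norm{\partial_1(\phi\circ\psi)(x)\wedge\cdots\wedge\partial_m(\phi\circ\psi)(x)}=\abs{\det D\psi(x)}\cdot\norm{\bigl(\partial_1\phi\wedge\cdots\wedge\partial_m\phi\bigr)(\psi(x))}.\]

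Finally I would integrate this identity over $V$ and apply the change-of-variables formula for the $C^1$ diffeomorphism $\psi$ (substituting $y=\psi(x)$, so that $\abs{\det D\psi(x)}\,dx$ becomes $dy$), obtaining
\[Vol(\phi\circ\psi)=\int_V \abs{\det D\psi(x)}\,\norm{\bigl(\partial_1\phi\wedge\cdots\wedge\partial_m\phi\bigr)(\psi(x))}\,dx=\int_U \norm{\bigl(\partial_1\phi\wedge\cdots\wedge\partial_m\phi\bigr)(y)}\,dy=Vol(\phi).\]
I do not expect a genuine obstacle here; the computation is formal and the integrand is continuous and nonnegative, so there are no integrability subtleties. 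The only point that deserves a word of care is that $\psi$ is merely $C^1$ rather than smooth, so one should invoke the change-of-variables theorem in its $C^1$ formulation (where it remains valid, with the absolute value of the Jacobian determinant accounting for the possible reversal of orientation). The "hard part," such as it is, is just tracking the chain rule through the exterior product, and the determinant identity absorbs exactly that bookkeeping.
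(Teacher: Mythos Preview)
Your proposal is correct and follows essentially the same route as the paper: reduce to open subsets of $\R^m$, use the chain rule together with the determinant identity for the top exterior power to extract $\abs{\det D\psi(x)}$, and finish with the change-of-variables formula. The only cosmetic difference is that the paper packages the same computation via the induced map $\bigwedge^m D\phi$ on exterior powers, whereas you apply the determinant identity directly to the coefficients $\partial\psi_j/\partial x_i$.
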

\begin{proof}
Without loss of generality, assume $U$ and $V$ are two open subsets of $\R^m.$ For a linear map $A:W_1\to W_2$ (from a vector space $W_1$ to a vector space $W_2$), denote by $\bigwedge ^k A :\bigwedge^k W_1\to \bigwedge^k W_2$ the linear map defined by
\[(\forall v_1,\ldots v_k\in W_1)\quad \left({\bigwedge}^k A\right)(v_1\wedge\cdots\wedge v_m):=(A v_1)\wedge\cdots\wedge (A v_k). \]
The lemma follows from the following sequence of equalities
\begingroup
\allowdisplaybreaks
\begin{align*}
    Vol(\phi\circ\psi)&= \int_V\norm{ \partial_1(\phi\circ\psi) \wedge \cdots \wedge \partial_m(\phi\circ\psi) }dx\\
    &= \int_V \norm{ (D\phi(\psi(x))\partial_1\psi(x)) \wedge\cdots \wedge (D\phi(\psi(x))\partial_m\psi(x))} dx\\
    &= \int_V \norm{{\bigwedge}^k(D\phi(\psi(x))) \partial_1\psi(x) \wedge \cdots \wedge \partial_m \psi(x)} dx\\
    &= \int_V \norm{ {\bigwedge}^k(D\phi(\psi(x))) \det{D\psi(x)} e_1 \wedge \cdots \wedge e_m } dx\\
    &= \int_V \norm{ {\bigwedge}^k(D\phi(\psi(x))) e_1 \wedge \cdots \wedge e_m }\cdot \abs{\det{D\psi(x)}}dx\\
    &= \int_V \norm{D\phi(\psi(x))e_1 \wedge \cdots \wedge D\phi(\psi(x))e_m }\cdot\abs{\det{D\psi(x)}}dx\\
    &= \int_V \norm{\partial_1\phi(\psi(x)) \wedge \cdots \wedge \partial_m\phi(\psi(x))}\cdot\abs{\det{D\psi(x)}}dx\\
    &= \int_U \norm{\partial_1\phi(y) \wedge \cdots \wedge \partial_m\phi(y)}dy\\
    &= Vol(\phi).
\end{align*}
\endgroup
Here, $e_1,\ldots, e_m$ stands for the standard basis of $\R^m.$
\end{proof}

\section{Family of embeddings and the swept volume}

In this section, we consider the maps of the form
\begin{align*}
&\phi: M\times \R\to\R^n,\\
&\phi(x,t):= x+ t\cdot v_1(x)+\cdots t^k\cdot v_k(x),
\end{align*}
where $M$ is an $m$-dimensional submanifold, and $v_i:M\to\R^n$ is a smooth map for $i\in\{1,\ldots,k\}.$ Proposition~\ref{prop:notarbitrary} proves that the rate of growth of $Vol\left(\left.\phi\right|_{M\times(-t,t)}\right)$ at 0 cannot be arbitrary. More precisely, it shows that $Vol\left(\left.\phi\right|_{M\times(-t,t)}\right)=o(t^{k(m+1)})$ implies $Vol\left(\left.\phi\right|_{M\times(-t,t)}\right)\equiv 0.$ Proposition~\ref{prop:contained} is a general statement about a smooth 1-parameter family of embeddings with vanishing volume. Proposition~\ref{prop:analyticlimit0} relates the order of contact with the growth rate of $Vol\left(\left.\phi\right|_{M\times(-t,t)}\right)$ at 0. In this proposition, it is assumed that $M$ is an analytic submanifold of $\R^n.$

\begin{prop}\label{prop:notarbitrary}
Let $M\subset\R^n$ be an $m$-dimensional submanifold, and let $v_i:M\to \R^n,$  $i\in\{1,\ldots,k\}$ be smooth maps. Denote by $\phi:M\times\R\to \R^n$ the map defined by
\[\phi(x,t):=x+t\cdot v_1(x)+\cdots+ t^k\cdot v_k(x).\]
If
\[ \lim_{t\to 0} \frac{Vol\left(\left.\phi\right|_{M\times(-t,t)}\right)}{t^{k(m+1)}}=0, \]
then $Vol(\phi)=0.$
\end{prop}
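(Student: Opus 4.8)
The plan is to compute $Vol\left(\left.\phi\right|_{M\times(-t,t)}\right)$ locally and show that, in each chart, it is a polynomial in $t$ of degree at most $k(m+1)$, so the same holds for a compact exhaustion and for the integral over all of $M$. Once the volume is a polynomial in $t$ that is $o(t^{k(m+1)})$, it must vanish identically. Concretely, I would work in a coordinate chart $\psi\colon U\to M$ with $U\subset\R^m$ open (using Lemma~\ref{lem:coordinates} to reduce to this case), so that on $U\times(-t,t)$ the map becomes $\Phi(u,s)=\psi(u)+s\,v_1(\psi(u))+\cdots+s^k v_k(\psi(u))$. The volume density is $\norm{\partial_{u_1}\Phi\wedge\cdots\wedge\partial_{u_m}\Phi\wedge\partial_s\Phi}$. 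Each partial derivative $\partial_{u_i}\Phi$ is a polynomial in $s$ of degree at most $k$ (with coefficients smooth in $u$), and $\partial_s\Phi=v_1(\psi(u))+2s\,v_2(\psi(u))+\cdots+ks^{k-1}v_k(\psi(u))$ is a polynomial in $s$ of degree at most $k-1$. Hence the wedge $\partial_{u_1}\Phi\wedge\cdots\wedge\partial_{u_m}\Phi\wedge\partial_s\Phi$ is, componentwise (in the basis $e_{i_1}\wedge\cdots\wedge e_{i_{m+1}}$ of $\bigwedge^{m+1}\R^n$), a polynomial in $s$ of degree at most $km+(k-1)=k(m+1)-1$.

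The subtlety is that the \emph{norm} of this $\bigwedge^{m+1}$-valued polynomial is a square root, not a polynomial. To get around this, I would square: $\norm{\omega(u,s)}^2$ is a polynomial in $s$ of degree at most $2\big(k(m+1)-1\big)$, with coefficients that are smooth (in fact continuous is enough) functions of $u$. Integrating $\norm{\omega(u,s)}$ over $u$ and then over $s\in(-t,t)$ — or rather, using Fubini to integrate over $s$ first is what we want, so I would instead argue directly that $F(t):=Vol\left(\left.\Phi\right|_{U\times(-t,t)}\right)=\int_U\!\int_{-t}^{t}\norm{\omega(u,s)}\,ds\,du$, and note that the inner integral $\int_{-t}^t\norm{\omega(u,s)}\,ds$ need not be polynomial in $t$ for fixed $u$. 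The cleaner route: differentiate. We have $F'(t)=\int_U\big(\norm{\omega(u,t)}+\norm{\omega(u,-t)}\big)\,du$, and more generally I would like to show that $F$ is a polynomial.

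The right technical tool is the observation that a $\bigwedge^{m+1}$-valued polynomial of degree $d$ in $s$ has squared-norm a nonnegative polynomial $P(u,s)$ of degree $2d$, and that $\int_{-t}^t \sqrt{P(u,s)}\,ds$ is \emph{not} in general polynomial in $t$ — so this naive approach fails and the degree-$k(m+1)$ claim about the swept volume itself, stated loosely in the introduction for the hypersurface case, must be interpreted more carefully. The actual mechanism, which I expect the paper uses, is the \emph{Weyl tube formula}: when $M$ is a hypersurface ($m=n-1$), the region swept is (up to lower-order corrections and orientation issues) a tube around part of $M$, whose volume the classical Weyl formula expresses as a polynomial in the "radius", and here the radius is governed by $t^k$; for general codimension one replaces this with the volume of a normal-bundle-type region. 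So the key step I would carry out is: (i) reduce to a compact piece via Lemma~\ref{lem:uniform}-style uniformity and exhaustion; (ii) in a chart, show the swept region's volume equals $\int_U\int_{-t}^t J(u,s)\,ds\,du$ where $J(u,s)=\norm{\omega(u,s)}$, and crucially that $J(u,s)^2$ is polynomial in $s$; (iii) invoke the structural fact — this is the heart — that after the change of variables straightening $\Phi$ against the nearest-point retraction $r$ onto $M$, the swept volume becomes the integral over $M$ of a genuine polynomial in $t$ of degree $\le k(m+1)$ (this is where a Weyl-tube-type identity, i.e. Proposition~\ref{prop:notarbitrary} being positioned after the nearest-point-map subsection, enters); (iv) conclude that $F(t)$ is a polynomial in $t$ of degree $\le k(m+1)$ with no constant term (since $F(0)=0$) and, being $o(t^{k(m+1)})$, identically zero, forcing $J\equiv 0$ a.e. and hence $Vol(\phi)=0$.

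The main obstacle is exactly step (iii): reconciling "$\norm{\cdot}$ of a vector-valued polynomial is a square root" with "the swept volume is polynomial in $t$". I expect the resolution is that one does \emph{not} claim the chart-wise density integrates to a polynomial, but rather that the \emph{total} swept volume $Vol(\phi)$, or its truncations $Vol(\phi|_{M\times(-t,t)})$, can be rewritten — using that $\phi(x,t)$ for small $t$ lands in a tubular neighbourhood and projects via $r$ back to $M$ — as the volume of a solid region $\{\,y : y = z + (\text{displacement of size } O(|t|^k) \text{ along normal directions}),\ z\in M_0\,\}$, and that such "generalized tube" volumes are polynomial in the displacement parameter by the Weyl-type computation. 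Making this precise — identifying the correct region, handling the fact that the $v_i$ need not be normal to $M$, and controlling the boundary/overlap of the swept region — is the real content; everything else (degree bookkeeping, the "$o(t^N)$ polynomial of degree $\le N$ is zero" step, uniform convergence on compacta) is routine.
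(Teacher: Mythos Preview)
Your proposal has a genuine gap at exactly the point you flag as ``the main obstacle.'' You correctly observe that the components $A_1,\ldots,A_\ell$ (with $\ell=\binom{n}{m+1}$) of the wedge $\omega(u,s)=\partial_{u_1}\Phi\wedge\cdots\wedge\partial_{u_m}\Phi\wedge\partial_s\Phi$ are polynomials in $s$ of degree at most $d:=k(m+1)-1$. But your proposed resolution --- rewriting the swept volume as a genuine polynomial in $t$ via a Weyl-tube-type identity and the nearest-point retraction --- is not what the paper does, and it is not needed. The introduction's remark about the hypersurface case is only heuristic: even when $\ell=1$, the integral $\int_{-t}^t|A_1(u,s)|\,ds$ is not literally a polynomial in $t$ because of the absolute value, so step~(iii)/(iv) of your plan cannot be carried out as stated.

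The paper's actual argument is a direct \emph{lower} bound, not an exact formula. One uses
\[
\norm{\omega(u,s)}=\sqrt{A_1(u,s)^2+\cdots+A_\ell(u,s)^2}\ \geq\ \abs{A_j(u,s)}
\]
for any fixed $j$. If some $A_j$ is not identically zero, let $b\leq d$ be the least index for which the coefficient $a_b(u)$ of $s^b$ in $A_j$ is not identically zero on $U$. A triangle-inequality estimate then gives, for all sufficiently small $\abs{t}$,
\[
\int_U\abs{A_j(u,t)}\,du\ \geq\ \tfrac{1}{2}\Big(\int_U\abs{a_b(u)}\,du\Big)\,\abs{t}^{\,b},
\]
and integrating in $s$ over $(-t,t)$ yields $Vol\big(\Phi|_{U\times(-t,t)}\big)\geq c\,t^{\,b+1}$ with $c>0$. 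Since $b+1\leq d+1=k(m+1)$, this contradicts the hypothesis that the swept volume is $o\big(t^{k(m+1)}\big)$. Hence every $A_j\equiv 0$, the volume density vanishes on each chart, and $Vol(\phi)=0$. No tube formula, nearest-point map, or polynomiality of $F(t)$ enters this proposition; those tools are used only in the separate proof of Proposition~\ref{prop:analyticlimit0}.
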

\begin{proof}
Denote $d:= k(m+1)-1.$ Let $\alpha: U \to \R^m,$ where $U\subset \R^m$ is open, be a parametrization of a subset of $M.$ Denote by $\psi: U\times \R\to\R^n$ the map defined by $\psi(x,t):=\phi(\alpha(x),t).$ The map $\psi$ is equal to the composition of the restriction $\left.\phi\right|_{\alpha(U)\times\R}$ with the diffeomorphism
\[U\times\R\to\alpha(U)\times \R\quad:\quad (x,t)\mapsto (\alpha(x),t).\]
Hence
\[Vol\left(\left.\psi\right|_{U\times(-t,t)}\right)=Vol\left(\left.\phi\right|_{\alpha(U)\times(-t,t)}\right)\leqslant Vol\left(\left.\phi\right|_{M\times(-t,t)}\right), \]
and, consequently, 
\[\lim_{t\to 0}\frac{Vol\left(\left.\psi\right|_{U\times(-t,t)}\right)}{t^{d+1}}=0.\]
The volume element $\norm{ \partial_1\psi(x,t)\wedge\cdots\wedge \partial_m\psi(x,t)\wedge\partial_t\psi(x,t)}$ is equal to
\[\norm{\bigwedge_{i=1}^m\left( \partial_i\alpha(x) + \sum_{j=1}^{k} t^j\cdot\partial_i (v_j\circ\alpha)(x)\right)\wedge\left(\sum_{j=1}^{k} j\cdot t^{j-1}\cdot (v_j\circ\alpha)(x)\right)}.\]
After developing the expression above by distributive law and after applying the Pythagorean theorem, the volume element transforms into the form
\[ \sqrt{A_1(x,t)^2+\cdots+A_\ell(x,t)^2}\]
where $A_1,\ldots, A_\ell$ are polynomials in $t$ of degree at most $d$ whose coefficients are smooth functions $U\to\R$ in $x$-variable, and $\ell:=\binom{n}{m+1}.$ Assume there exists $j\in\{1,\ldots, \ell\}$ such that $A_j(x,t)$ is not identically equal to 0. Then,
\[A_j(x,t)=a_0(x)+a_1(x)\cdot t+\cdots+ a_d(x)\cdot t^d, \]
where $a_1,\ldots,a_d:U\to \R$ are smooth functions that are not all identically equal to 0. Let $b\in\{0,\ldots,d\}$ be the smallest index such that $a_b$ is not identically equal to 0. There exists $\varepsilon>0$ such that
\[ \frac{1}{2}\int_U\abs{a_b(x)}dx\cdot \abs{t}^b\geqslant \sum_{i=b+1}^d \int_U \abs{a_i(x)}dx \cdot \abs{t}^i, \]
for $\abs{t}<\varepsilon.$ Hence, for $\abs{t}<\varepsilon,$
\begingroup
\allowdisplaybreaks
\begin{gather*}
    \int_U\sqrt{A_1(x,t)^2+\cdots+A_\ell(x,t)^2} dx\\
    \rotatebox[origin=c]{-90}{$\geqslant$}\\
    \int_U\abs{A_j(x,t)}dx\\
    \rotatebox[origin=c]{-90}{$\geqslant$}\\
    \int_U\abs{a_b(x)}dx\cdot\abs{t}^b- \sum_{i=b+1}^d\int_U\abs{a_i(x)}dx\cdot\abs{t}^i\\
    \rotatebox[origin=c]{-90}{$\geqslant$}\\
    \frac{1}{2}\int_U\abs{a_b(x)}dx\cdot\abs{t}^b
\end{gather*}
\endgroup
This further implies
\begin{align*}
    0&=\lim_{t\to 0}\frac{Vol\left(\left.\psi\right|_{U\times(-t,t)}\right)}{t^{d+1}}\\
    &\geqslant \lim_{t\to 0}t^{-(d+1)}\cdot \int_{-t}^t \frac{1}{2}\int_U\abs{a_b(x)}dx\cdot \abs{s}^bds\\
    &= \lim_{t\to 0}t^{-(d+1)}\cdot \int_{0}^t \int_U\abs{a_b(x)}dx\cdot \abs{s}^bds\\
    &=\lim_{t\to 0} t^{-(d+1)}\cdot  \frac{t^{d+1}}{d+1} \cdot \int_U\abs{a_d(x)}dx\\
    &= \frac{1}{d+1}\int_U\abs{a_b(x)}dx.
\end{align*}
The continuity of $a_b$ now implies $a_b(x)=0$ for all $x\in U.$ Contradiction! Therefore 
\[A_1(x,t)=\cdots=A_\ell(x,t)=0,\]
for all $x\in U$ and $t\in\R,$ and, consequently, 
\[Vol\left(\left.\phi\right|_{\alpha(U)\times \R}\right)=Vol(\psi)=0.\]
This holds for all charts $\alpha$ of $M.$ Hence $Vol(\phi)=0.$
\end{proof}

\begin{prop}\label{prop:contained}
Let $M$ be an $m$-dimensional manifold, let $I$ be an open interval containing 0, and let $\phi_t:M\to\R^n$ be a smooth family of embeddings. Assume $Vol(\phi: M\times I\to \R^n)=0.$ Then, for all $x\in M,$ there exists $\varepsilon=\varepsilon(x)>0$ such that 
\[(\forall t\in (-\varepsilon,\varepsilon))\quad \phi_t(x)\in \phi_0(M).\]
\end{prop}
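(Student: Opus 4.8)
The plan is to decode the hypothesis $Vol(\phi)=0$ into a rank condition on the ``total'' map $\Phi\colon M\times I\to\R^n$, $\Phi(x,t):=\phi_t(x)$, and then feed that into the constant rank theorem. Geometrically, $Vol(\phi)=0$ should say that the velocity $\partial_t\phi(x,t)$ is always tangent to the moving submanifold $\phi_t(M)$, which is precisely a constant-rank statement.

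First I would show that the volume element of $\Phi$ vanishes identically. Fix a chart $\alpha\colon U\to M$ of $M$ and set $\psi(x,t):=\phi(\alpha(x),t)$. By Lemma~\ref{lem:coordinates} and monotonicity of the integral, $0\leqslant Vol(\psi)=Vol\!\left(\phi|_{\alpha(U)\times I}\right)\leqslant Vol(\phi)=0$, so the non-negative continuous integrand $\norm{\partial_1\psi(x,t)\wedge\cdots\wedge\partial_m\psi(x,t)\wedge\partial_t\psi(x,t)}$ is identically $0$ on $U\times I$. Since $\phi_t$ is an embedding, the vectors $\partial_1\psi(x,t),\ldots,\partial_m\psi(x,t)$ are linearly independent and span $T_{\psi(x,t)}\phi_t(M)$; hence the vanishing of the $(m+1)$-fold wedge forces $\partial_t\psi(x,t)\in T_{\psi(x,t)}\phi_t(M)$. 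Undoing the chart, this says that the image of $D\Phi_{(x,t)}$ equals $T_{\phi_t(x)}\phi_t(M)$, an $m$-dimensional space, for every $(x,t)\in M\times I$; that is, $\Phi$ has constant rank $m$.

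Next I would localize at a fixed point $x\in M$. By the constant rank theorem applied to $\Phi$ near $(x,0)$, there is (after shrinking) an open neighbourhood $W$ of $(x,0)$ in $M\times I$ such that $S:=\Phi(W)$ is an embedded $m$-dimensional submanifold of $\R^n$, with $p:=\phi_0(x)\in S$. Shrinking $W$ further, we may assume $U_0\times\{0\}\subseteq W$ for some open neighbourhood $U_0$ of $x$ in $M$, so $\phi_0(U_0)=\Phi(U_0\times\{0\})\subseteq S$. Because $S$ is embedded, $\phi_0|_{U_0}\colon U_0\to S$ is smooth; its differential is injective (being the restriction of the embedding $\phi_0$) and maps between $m$-dimensional spaces, so $\phi_0|_{U_0}$ is a local diffeomorphism, hence an open map. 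Therefore $\phi_0(U_0)$ is open in $S$, and we may pick an open set $V\subseteq\R^n$ with $S\cap V=\phi_0(U_0)$; in particular $p\in S\cap V\subseteq\phi_0(M)$. Finally, since $W$ is open and $t\mapsto\phi_t(x)$ is continuous with $\phi_0(x)=p\in V$, there is $\varepsilon=\varepsilon(x)>0$ such that $(x,t)\in W$ and $\phi_t(x)\in V$ for all $t\in(-\varepsilon,\varepsilon)$; for such $t$, $\phi_t(x)=\Phi(x,t)\in\Phi(W)\cap V=S\cap V\subseteq\phi_0(M)$, as required.

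I expect the main obstacle to be the (modest) first step: one must genuinely use that each $\phi_t$ is an \emph{embedding} so that $\partial_1\psi,\ldots,\partial_m\psi$ stay linearly independent, which is what makes the vanishing $(m+1)$-vector pin $\partial_t\psi$ \emph{inside} the tangent plane rather than permitting a degeneration of the first $m$ vectors; after that, the argument is the routine differential-topology package of the constant rank theorem together with the fact that an injective immersion of an $m$-manifold into an $m$-manifold is an open embedding, which is exactly what upgrades $\phi_0(U_0)$ to an honest neighbourhood of $p$ in $S$.
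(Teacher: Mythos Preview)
Your proof is correct. The opening step---vanishing of the continuous non-negative volume element forces $\partial_t\phi_t(x)\in\operatorname{im}D\phi_t(x)$, hence $\operatorname{rank}D\Phi\equiv m$---is essentially the paper's first paragraph (the paper phrases it contrapositively: a point of rank $m+1$ would make $\phi$ a local embedding with positive volume), and your use of the embedding hypothesis to keep the first $m$ wedge factors independent is exactly the point.

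Where you diverge is in the second half. The paper, having $\partial_t\phi_t(x)=D\phi_t(x)Y_t(x)$ for a smooth time-dependent vector field $Y_t$ on $M$, integrates $-Y_t$ to a local flow $\psi_t$ and checks $\partial_t(\phi_t(\psi_t(y)))=0$; this gives the explicit identity $\phi_t(x)=\phi_0(\psi_t^{-1}(x))$, so one sees not merely $\phi_t(x)\in\phi_0(M)$ but precisely which point of $\phi_0(M)$ it is. Your route packages the same rank information into the constant rank theorem, obtains a local $m$-dimensional image $S$, and then uses that $\phi_0|_{U_0}\colon U_0\to S$ is an injective immersion between equidimensional manifolds (hence open) to trap $\phi_t(x)$ inside $\phi_0(U_0)$. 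Both are standard differential-topology arguments; the paper's flow argument is more explicit and slightly more elementary (no constant rank theorem needed), while your approach is more structural and avoids having to verify that $Y_t$ is smooth and that its flow exists on a suitable domain.
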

\begin{proof}
 Suppose there exists $(x_0,t_0)\in M\times I$ such that $D\phi(x_0,t_0)$ is of rank $m+1.$ Then, there exists a neighbourhood $U\subset M\times I$ of $(x_0,t_0)$ such that $\left.\phi\right|_{U}$ is an embedding. The volume of an embedding is positive. Hence
 \[Vol(\phi)\geqslant Vol(\left.\phi\right|_{U})>0.\]
 This contradicts $Vol{\phi}=0.$ Therefore 
\[(\forall (x,t)\in M\times I)\quad \operatorname{rank} D\phi(x,t)\leqslant m.\]
Since $\phi_t$ is a family of embeddings, the rank of $D\phi(x,t)$ is less than $m+1$ if, and only if, 
\[\partial_t\phi_t(x)\in \im D\phi_t(x).\]
Denote by $Y_t$ the smooth vector field on $M$ defined by 
\[\partial_t\phi_t(x)= D\phi_t(x) Y_t(x).\]
Denote by $\psi$ the (locally defined) flow of the vector field $-Y_t.$ Fix $x\in M.$ Let $O$ be a neighbourhood of $x$ and let $\varepsilon>0$ be such that $\psi_t(y)$ is well defined for $y\in O$ and $t\in(-\varepsilon,\varepsilon).$  Let $\delta>0$ be such that $\psi_t^{-1}(x)\in O $ for all $t\in (-\delta,\delta).$ Since
\[\begin{split}
\frac{\partial}{\partial t}(\phi_t(\psi_t(x)))&= D\phi_t(\psi_t(x))Y_t(\psi_t(x))+ D\phi_t(\psi_t(x))\partial_t\psi_t(x)\\
&= D\phi_t(\psi_t(x))Y_t(\psi_t(x))+ D\phi_t(\psi_t(x))(-Y_t(\psi_t(x)))\\
&=0,
\end{split}\]
for all $y\in O$ and $t\in(-\varepsilon, \varepsilon),$ we have $\phi_t(\psi_t(y))=\phi_0(y).$  By substituting $y=\psi_t^{-1}(x),$ one gets
\[(\forall t\in (-\delta,\delta))\quad \phi_t(x)=\phi_0(\psi_t^{-1}(x)))\in\phi_0(M).\]
\end{proof}

\begin{prop}\label{prop:analyticlimit0}
Let $M\subset \R^n$ be an analytic submanifold, and let $\phi:M\times\R\to \R^n$ be a smooth map such that
\begin{itemize}
    \item $\phi(x,0)=x,$ for all $x\in M,$
    \item $\phi(x,t)=x,$ for $t\in \R$ and for $x$ outside of a compact set,
    \item the curve $t\mapsto \phi(x,t)$ is analytic and has a contact of order $k\in \mathbb{N}$ with $M$ at $t=0$ for all $x\in M.$
\end{itemize}
Then,
\[\lim_{t\to 0} \frac{Vol\left(\left.\phi\right|_{M\times(-t,t)}\right)}{t^{k}}=0.\]
\end{prop}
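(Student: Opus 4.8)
The plan is to work inside a tubular neighbourhood of $M$, where the nearest point map is available, and to split $\phi$ there into its orthogonal projection onto $M$ plus a normal error. At each point the derivatives of the projected part span a space of dimension at most $m$, so their contribution to the $(m+1)$-dimensional volume element vanishes identically; and the normal error, together with all of its first derivatives, is $O(t^{k+1})$ because $t\mapsto\phi(x,t)$ has contact of order $k$ with $M$ at $t=0$. Integrating the resulting pointwise bound over the compact set where the motion is non-trivial will give $Vol\left(\left.\phi\right|_{M\times(-t,t)}\right)=O(t^{k+1})=o(t^{k})$, which is more than enough.

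Concretely, let $r\colon N\to M$ be the nearest point map on a normal tubular neighbourhood $N$ of $M$ (analytic because $M$ is, though only $C^{1}$ is used here), and let $K_{0}\subset M$ be a compact set outside of which $\phi(x,t)=x$ for all $t$. First I would fix $\delta>0$ small enough that $\phi(M\times(-\delta,\delta))\subset N$; this uses only $\phi(\cdot,0)=\mathrm{id}_{M}$, the fact that off $K_{0}$ one already has $\phi(x,t)=x\in M$, and compactness of $K_{0}$. For $x\notin K_{0}$ one has $\partial_{t}\phi(x,t)=0$, so the volume element $\norm{\partial_{1}\phi\wedge\cdots\wedge\partial_{m}\phi\wedge\partial_{t}\phi}$ vanishes there; hence it suffices to cover $K_{0}$ by finitely many analytic charts of $M$ and bound the volume element in each. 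Fixing one chart $\alpha\colon U\to M$ and a compact $L\subset U$, I would set $\psi(x,t):=\phi(\alpha(x),t)$ and $\rho(x,t):=\psi(x,t)-r(\psi(x,t))$.

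Next I would record the consequences of the order-of-contact hypothesis, essentially repeating the computation behind Lemma~\ref{lem:uniform}: since $r|_{M}=\mathrm{id}$, the curve $t\mapsto r(\psi(x,t))$ lies in $M$, and since the $j$-jet at $0$ of a composition $r\circ\gamma$ depends only on the $j$-jet of $\gamma$, the hypothesis forces $\partial_{t}^{j}\rho(\cdot,0)\equiv 0$ for $0\leqslant j\leqslant k$. Differentiating this identity in the chart variables and applying Taylor's theorem with $x$ ranging over the compact set $L$ yields uniform bounds
\[\norm{\partial_{t}\rho(x,t)}\leqslant C_{1}\abs{t}^{k}\qquad\text{and}\qquad\norm{\partial_{i}\rho(x,t)}\leqslant C_{2}\abs{t}^{k+1}\]
for $x\in L$, $\abs{t}<\delta$ and $1\leqslant i\leqslant m$. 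Then I would expand the volume element by multilinearity of the wedge product, writing $\partial_{i}\psi=\partial_{i}(r\circ\psi)+\partial_{i}\rho$ and $\partial_{t}\psi=\partial_{t}(r\circ\psi)+\partial_{t}\rho$. The unique term containing no $\rho$-factor,
\[\partial_{1}(r\circ\psi)\wedge\cdots\wedge\partial_{m}(r\circ\psi)\wedge\partial_{t}(r\circ\psi),\]
is a wedge of $m+1$ vectors all lying in the $m$-dimensional space $T_{r(\psi(x,t))}M$, hence it is $0$. Every remaining term carries a factor among $\partial_{1}\rho,\ldots,\partial_{m}\rho,\partial_{t}\rho$, so by the bounds above, the inequality $\norm{v_{1}\wedge\cdots\wedge v_{m+1}}\leqslant\norm{v_{1}}\cdots\norm{v_{m+1}}$, and the boundedness of $\partial_{i}\psi,\partial_{t}\psi,\partial_{i}(r\circ\psi),\partial_{t}(r\circ\psi)$ on $L\times[-\delta,\delta]$, it is $O(t^{k})$ uniformly. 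Thus $\norm{\partial_{1}\psi\wedge\cdots\wedge\partial_{m}\psi\wedge\partial_{t}\psi}\leqslant C\abs{t}^{k}$ on $L\times(-\delta,\delta)$; summing over a finite chart cover of $K_{0}$ with a subordinate partition of unity gives $Vol\left(\left.\phi\right|_{M\times(-t,t)}\right)\leqslant C'\abs{t}^{k+1}$ for $\abs{t}<\delta$, and dividing by $t^{k}$ finishes the proof.

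The only conceptually essential step is the decomposition $\phi=r\circ\phi+(\phi-r\circ\phi)$ together with the dimension count that kills the leading term of the volume element. The rest is routine; the one thing requiring genuine care is that the $O$-estimates on $\rho$ and on its first derivatives are uniform over the compact support of the motion, which is exactly what the compactness hypothesis in the statement is for. I do not expect a serious obstacle beyond the bookkeeping of charts, the partition of unity, and the uniform choice of $\delta$.
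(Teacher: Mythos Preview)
Your argument is correct, and it takes a genuinely different route from the paper's. The paper does not decompose the volume element as you do; instead it first reparametrizes $\phi_t$ by the flow $\theta_t$ of a vector field $Y_t$ chosen so that $\psi_t:=\phi_t\circ\theta_t$ satisfies $r(\psi_t(x))=x$ for all small $t$ (i.e.\ the motion becomes purely normal to $M$). It then bounds the volume element crudely by $C\cdot\norm{\partial_s\psi_s(x)}$ via $\norm{v_1\wedge\cdots\wedge v_\ell}\leqslant\norm{v_1}\cdot\norm{v_2\wedge\cdots\wedge v_\ell}$, and controls the resulting arc-length integral $\int_{-t}^t\norm{\partial_s\psi_s(x)}\,ds$ by the endpoint distances $d(\psi_{\pm t}(x),M)$ through two auxiliary lemmas: Lemma~\ref{lem:analyticmonotone}, which uses analyticity of $M$ and of the curves $t\mapsto\phi(x,t)$ to show that the coordinates of $x-\psi_t(x)$ are monotone on small one-sided intervals, and Lemma~\ref{lem:monotonecoordinates}, which bounds the length of a curve with monotone coordinates by $n$ times its chord. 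Lemma~\ref{lem:uniform} then finishes.

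Your approach bypasses the reparametrization, the arc-length estimate, and both auxiliary lemmas: the single observation that $m+1$ vectors tangent to $T_{r(\psi(x,t))}M$ wedge to zero kills the leading term, and every remaining term in the multilinear expansion carries a $\rho$-factor handled by a uniform Taylor bound. A notable by-product is that your argument never uses either analyticity hypothesis; smoothness of $M$ (so that $r$ is smooth) and of $\phi$ suffices. The paper's proof, by contrast, genuinely needs analyticity for the monotonicity step in Lemma~\ref{lem:analyticmonotone}. So your proof is both shorter and stronger, upgrading the proposition to the $C^\infty$ setting.
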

\begin{proof}
Without loss of generality, assume that $M$ is covered by a single chart $\alpha:U\subset\R^m\to M.$ Let $N$ be a normal tubular neighbourhood of $M,$ and let $r:N\to M$ be the nearest point map. Since $\phi_t(x)$ is $t$-independent for $x$ outside of a compact set (and since $\phi_0(x)\in M$), there exists $\delta>0$ such that $\phi(M\times (-\delta,\delta))\subset N.$ Let $\delta_1\in (0,\delta)$ be such that $\phi_t: M\to \R^n$ is an embedding for  all $t\in [-\delta_1,\delta_1].$ Such $\delta_1$ exists because the set of embeddings $M\to \R^n$ is open \cite[Theorem~1.4]{hirsch2012differential}. Let $Y_t, t\in[-\delta_1,\delta_1]$ be the vector field on $M$ defined by
\[D\phi_t(x) Y_t(x)=-dr(\partial_t\phi_t(x)).\]
Denote by $\theta_t:M\to M$ the flow of the vector field $Y_t.$ Let $\psi_t:M\to \R^n$ be the smooth family of embeddings defined by $\psi_t(x):=\phi_t(\theta_t(x)).$  For $t\in[-\delta_1,\delta_1],$ the following holds
\[\phi_t(M)=\psi_t(M),\quad Vol\left(\left.\phi\right|_{M\times(-t,t)}\right)=Vol\left(\left.\psi\right|_{M\times(-t,t)}\right).\]
There exists a compact set $K\subset U$ such that $\psi_t(x)=x$ for $x\in \alpha(U\setminus K)$ and $t\in[-\delta_1,\delta_1].$ Denote
\[ C:=\max _{x\in K, \abs{t}\leqslant\delta_1} \norm{\partial_1(\psi_t\circ\alpha)(x)\wedge\cdots\wedge\partial_m(\psi_t\circ\alpha)(x)}.\]
Since
\[\norm{v_1\wedge\cdots\wedge v_\ell}\leqslant \norm{v_1}\cdot\norm{v_2\wedge\cdots\wedge v_\ell},\]
we get
\begin{align*}
    Vol&(\left.\phi\right|_{M\times(-t,t)})=Vol(\left.\psi\right|_{M\times(-t,t)})\\
    &=\int_U\int_{-t}^t\norm{\partial_1(\psi_s\circ\alpha)(x)\wedge\cdots\wedge\partial_m(\psi_s\circ\alpha)(x)\wedge\partial_s\psi_s\circ\alpha(x)}dsdx\\
    &\leqslant\int_U\int_{-t}^t\norm{\partial_s\psi_s\circ\alpha(x)}\cdot\norm{\partial_1(\psi_s\circ\alpha)(x)\wedge\cdots\wedge\partial_m(\psi_s\circ\alpha)(x)}dsdx\\
    &=\int_K\int_{-t}^t\norm{\partial_s\psi_s\circ\alpha(x)}\cdot\norm{\partial_1(\psi_s\circ\alpha)(x)\wedge\cdots\wedge\partial_m(\psi_s\circ\alpha)(x)}dsdx\\
    &\leqslant C\cdot \int_K\int_{-t}^t\norm{\partial_s\psi_s\circ\alpha(x)}ds,
\end{align*}
for $t\in[0,\delta_1).$ It is enough to prove 
\begin{equation}\label{eq:lim}
\lim_{t\to 0}\frac{1}{t^k}\cdot\int_{-t}^t\norm{\partial_s\psi_s(x)}ds=0.
\end{equation}
Since
\begin{align*}
    \partial_t\psi_t(x)&= (\partial_t\phi_t)(\theta_t(x))+ D\phi_t(\theta_t(x))\partial_t\theta_t(x)\\
    &=(\partial_t\phi_t)(\theta_t(x))+ D\phi_t(\theta_t(x))Y_t(\theta_t(x))\\
    &=(\partial_t\phi_t)(\theta_t(x))- dr(\partial_t\phi_t(\theta_t(x)))\\
    &\in \ker dr,
\end{align*}
we have 
\[ \partial_t r(\psi_t(x)) = dr(\psi_t(x))\partial_t\psi_t(x)=0, \]
and, consequently, $r(\psi_t(x))=x$ for $t\in(-\delta_1,\delta_1).$ By Lemma~\ref{lem:analyticmonotone} below, for $x\in M,$ there exists $\varepsilon_x>0$ such that the coordinates of $r(\psi_t(x))-\psi_t(x)=x-\psi_t(x)$ are monotone (with respect to $t$) for $t\in [-\varepsilon_x,0]$ and for $t\in [0,\varepsilon_x].$ Lemma~\ref{lem:monotonecoordinates} below implies
\begin{align*}
    \int_{-t}^t \norm{\partial_s\psi_s(x)}ds&= \int_{-t}^t \norm{\partial_s(x-\psi_s(x))}ds\\
    &=\int_{-t}^0 \norm{\partial_s(x-\psi_s(x))}ds + \int_{0}^t \norm{\partial_s(x-\psi_s(x))}ds\\
    &\leqslant n\cdot\left( \norm{\psi_{-t}(x)-x} + \norm{\psi_t(x)-x} \right)\\
    &= n\cdot \left( d(\psi_{-t}(x), M) + d(\psi_{t}(x), M)  \right)\\
    &\leqslant n\cdot\left( \sup_{y\in M}d(\psi_{-t}(y), M) + \sup_{y\in M}d(\psi_{t}(y), M) \right)\\
    &=n\cdot\left( \sup_{y\in M}d(\phi_{-t}(y), M) + \sup_{y\in M}d(\phi_{t}(y), M) \right)
\end{align*}
for $t\in[0,\varepsilon_x].$ Hence (by Lemma~\ref{lem:uniform}) \eqref{eq:lim} holds, and the proof is finished.
\end{proof}

\begin{lem}\label{lem:analyticmonotone}
Let $M\subset \R^n$ be an analytic submanifold, and let $\gamma:\R\to\R^n$ be an analytic curve such that $\gamma(0)\in M.$ Denote by $r:N\to M$ the nearest point map defined in a normal tubular neighbourhood $N$ of $M.$ Then, there exists $\varepsilon>0$ such that the coordinates of the function
\[[-\varepsilon,\varepsilon]\to \R^n\quad:\quad t\mapsto r(\gamma(t))-\gamma(t) \]
are monotone (not necessarily strictly) on $[-\varepsilon,0]$ and $[0,\varepsilon].$
\end{lem}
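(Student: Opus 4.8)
The plan is to reduce the statement to the elementary fact that a real-analytic function of one variable, unless identically zero, has only isolated zeros. Since $M$ is an analytic submanifold, the nearest point map $r:N\to M$ is analytic. Because $\gamma(0)\in M\subset N$ and $N$ is open, continuity of $\gamma$ provides a $\delta>0$ with $\gamma(t)\in N$ for all $t\in(-\delta,\delta)$; on this interval the map
\[ h:=r\circ\gamma-\gamma:(-\delta,\delta)\to\R^n \]
is a composition and difference of analytic maps, hence analytic. Write $h=(h_1,\dots,h_n)$ in the standard coordinates of $\R^n$, so that $h_i$ is a real-analytic function of the single variable $t$ near $0$.

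Next I would treat each coordinate $h_i$ separately. Its derivative $h_i'$ is again real-analytic on $(-\delta,\delta)$, so one of two things happens. Either $h_i'\equiv 0$, in which case $h_i$ is constant and trivially monotone on any subinterval; or $h_i'$ is not identically zero, and then its zero set is discrete, so there is an $\varepsilon_i\in(0,\delta)$ with $h_i'(t)\neq 0$ for all $t\in[-\varepsilon_i,0)\cup(0,\varepsilon_i]$. By continuity, $h_i'$ then keeps a constant sign on $[-\varepsilon_i,0)$ and a constant sign on $(0,\varepsilon_i]$, whence $h_i$ is monotone on $[-\varepsilon_i,0]$ and on $[0,\varepsilon_i]$. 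Setting $\varepsilon:=\min_i\varepsilon_i$ (and $\varepsilon_i:=\delta/2$, say, in the degenerate cases) simultaneously makes every coordinate $h_i$ monotone on $[-\varepsilon,0]$ and on $[0,\varepsilon]$, which is the assertion.

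There is no real obstacle here: the whole content of the lemma is carried by the analyticity of $r$ (quoted from Federer) combined with the identity theorem for one-variable analytic functions. The only points deserving a moment's care are verifying that $\gamma(t)$ stays inside the tubular neighbourhood $N$ for $t$ near $0$ so that $h$ is defined, and isolating the degenerate case in which some coordinate $h_i$ is locally constant; both are routine.
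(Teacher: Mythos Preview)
Your argument is correct and follows essentially the same route as the paper's proof: both reduce to showing that each coordinate of $r\circ\gamma-\gamma$ has a derivative of constant sign on either side of $0$, using analyticity of $r$ (from Federer) and the one-variable identity principle. The only cosmetic difference is that the paper writes out the Taylor expansion of $f_j'$ explicitly to exhibit the nonvanishing near $0$, whereas you invoke the isolated-zeros property of analytic functions directly; these are the same idea in different dress.
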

\begin{proof}
Since $M$ is an analytic submanifold of $\R^n,$ the nearest point map is analytic \cite[page~240]{federer2014geometric}. The set $\gamma^{-1}(N)$ is open. Hence there exists $\delta>0$ such that $\gamma(t)\in N$ for all $t\in(-\delta,\delta).$ Denote by
\[f=(f_1,\ldots, f_n):(-\delta,\delta)\to\R^n\]
the analytic map defined by $f(t)=r(\gamma(t))-\gamma(t).$ Fix $j\in\{1,\ldots,n\}.$ If $f_j^{(k)}(0)=0$ for all $k\in\mathbb{N},$ then (since $f_j$ is analytic) there exists $\varepsilon>0$ such that $f_j(t)=0$ for $t\in(-\varepsilon,\varepsilon).$ Consequently, $f_j$ is monotone on $(-\varepsilon,\varepsilon).$
Assume, now, there exists $k\in \mathbb{N}$ such that $f_j^{(k)}(0)\not=0,$ and such that $f_j^{(i)}(0)=0$ for all $i\in\{1,\ldots,k-1\}.$ The Taylor approximation for $f'_j$ implies
\begin{align*}
    f_j'(t) &= \frac{f_j^{(k)}(0)}{(k-1)!}\cdot t^{k-1} + \frac{f_j^{(k+1)}(c_t)}{k!}\cdot t^{k}\\
    &=\frac{t^{k-1}}{(k-1)!}\left( f_j^{(k)}(0)+\frac{1}{k}\cdot f_j^{(k+1)}(c_t)\cdot t \right),
\end{align*} 
for $t\in (-\delta,\delta)$ and for some $c_t$ between 0 and $t.$ Since $\frac{1}{k}\cdot f_j^{(k+1)}$ is a bounded function on $\left[-\frac{\delta}{2},\frac{\delta}{2}\right],$ there exists $\varepsilon\in\left(0,\delta\right)$ such that the function $f'_j$ does not change the sign on intervals $(-\varepsilon,0)$ and $(0,\varepsilon).$ Hence $f_j$ is monotone on $[-\varepsilon,0]$ and $[0,\varepsilon].$
\end{proof}

\begin{lem}\label{lem:monotonecoordinates}
Let $[a,b]\subset\R$ be a compact interval, and let $\gamma=(\gamma_1,\ldots,\gamma_n):[a,b]\to\R^n$ be a $C^1$ curve such that $\gamma_i:[a,b]\to\R$ is monotone for all $i\in\{1,\ldots,n\}.$ Then,
\[\operatorname{length}(\gamma)\leqslant n\cdot \norm{\gamma(a)-\gamma(b)}.\]
\end{lem}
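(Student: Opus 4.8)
The plan is to reduce the statement to the one-dimensional case and then estimate each coordinate's contribution to the length separately. First I would recall that the length of a $C^1$ curve $\gamma=(\gamma_1,\ldots,\gamma_n):[a,b]\to\R^n$ is
\[\operatorname{length}(\gamma)=\int_a^b\norm{\gamma'(t)}\,dt=\int_a^b\sqrt{\gamma_1'(t)^2+\cdots+\gamma_n'(t)^2}\,dt.\]
The key elementary inequality is that the Euclidean norm is bounded by the $\ell^1$-norm, i.e. $\sqrt{x_1^2+\cdots+x_n^2}\leqslant\abs{x_1}+\cdots+\abs{x_n}$ for all $(x_1,\ldots,x_n)\in\R^n$. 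Applying this pointwise to $\gamma'(t)$ and then using linearity of the integral yields
\[\operatorname{length}(\gamma)\leqslant\sum_{i=1}^n\int_a^b\abs{\gamma_i'(t)}\,dt=\sum_{i=1}^n\operatorname{length}(\gamma_i),\]
where $\operatorname{length}(\gamma_i)$ denotes the length (total variation) of the scalar curve $\gamma_i:[a,b]\to\R$.

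Next I would invoke the monotonicity hypothesis: if $\gamma_i$ is monotone on $[a,b]$, then $\gamma_i'$ does not change sign, so $\int_a^b\abs{\gamma_i'(t)}\,dt=\abs{\int_a^b\gamma_i'(t)\,dt}=\abs{\gamma_i(b)-\gamma_i(a)}$ by the fundamental theorem of calculus. Therefore $\operatorname{length}(\gamma_i)=\abs{\gamma_i(b)-\gamma_i(a)}$ for each $i$. Combining this with the previous display gives
\[\operatorname{length}(\gamma)\leqslant\sum_{i=1}^n\abs{\gamma_i(b)-\gamma_i(a)}.\]
Finally, each term satisfies $\abs{\gamma_i(b)-\gamma_i(a)}\leqslant\norm{\gamma(b)-\gamma(a)}$, since the absolute value of one coordinate of a vector is at most the vector's Euclidean norm. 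Hence the sum is bounded by $n\cdot\norm{\gamma(b)-\gamma(a)}=n\cdot\norm{\gamma(a)-\gamma(b)}$, which is the claimed estimate.

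There is no serious obstacle here; the only point requiring a word of care is the passage from $\int_a^b\abs{\gamma_i'}$ to $\abs{\gamma_i(b)-\gamma_i(a)}$, which relies on the sign of $\gamma_i'$ being constant — this is exactly where the monotonicity assumption is used, and without it the inequality would fail (a coordinate could oscillate and contribute arbitrarily large length while returning near its starting value). The $C^1$ regularity is more than enough to justify the fundamental theorem of calculus and the integral representation of arc length; one could even work with absolutely continuous $\gamma_i$, but that generality is not needed. I would present the argument as the short chain of inequalities above, emphasizing the two uses of the norm comparison (Euclidean $\leqslant$ $\ell^1$ applied to the velocity, and single-coordinate $\leqslant$ Euclidean applied to the displacement) and the single use of monotonicity in between.
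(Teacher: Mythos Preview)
Your proof is correct and follows essentially the same approach as the paper's: bound $\norm{\gamma'(t)}$ by a sum of $\abs{\gamma_i'(t)}$, use monotonicity to turn each $\int_a^b\abs{\gamma_i'}$ into $\abs{\gamma_i(b)-\gamma_i(a)}$, and then compare the resulting $\ell^1$-norm of the displacement to its Euclidean norm. The only cosmetic difference is in the norm inequalities chosen at the two ends---the paper routes through $\sqrt{n}\cdot\max_j\abs{\gamma_j'}$ and then Cauchy--Schwarz, while you use $\ell^2\leqslant\ell^1$ on the velocity and the trivial coordinate bound on the displacement---but both chains land on the same constant $n$.
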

\begin{proof}
Since
\[\sqrt{\sum_{j}\left(\gamma_j'(t)\right)^2}\leqslant \sqrt{n}\cdot \max_{j}\abs{\gamma_j'(t)}\leqslant \sqrt{n}\cdot\sum_j\abs{\gamma_j'(t)},\]
the length of $\gamma$ is bounded by
\begin{align*}
    \operatorname{length}(\gamma)&\leqslant \sqrt{n}\cdot \sum_j\int_a^b\abs{\gamma_j'(t)}dt\\
    &=\sqrt{n}\cdot\sum_j\abs{\int_a^b \gamma_j'(t)dt}\\
    &=\sqrt{n}\cdot \sum_j\abs{\gamma_j(b)-\gamma_j(a)}\\
    &\leqslant n^{\frac{3}{2}}\cdot\sqrt{\frac{1}{n}\cdot \sum_j\abs{\gamma_j(b)-\gamma_j(a)}^2}\\
    &=n\cdot\norm{\gamma(b)-\gamma(a)}.
\end{align*}
In the sequence of inequalities above, we used 
\[\abs{\int_a^b\gamma'_j(t)dt}=\int_a^b\abs{\gamma'_j(t)}dt\]
(which holds because $\gamma_j$ is monotone) and the Cauchy-Schwarz inequality.
\end{proof}

\section{Proof of the main theorem}

\begin{proof}[Proof of Theorem~\ref{thm:generalization}]
Let $p$ be an arbitrary point in $M,$ and let $v_1,\ldots, v_k:M\to \R^n$ be smooth compactly supported maps such that
\[x + t\cdot v_1(x) + \cdots + t^k\cdot v_k(x)\in \Gamma_x\]
for all $x\in M,$ and such that
\[t\mapsto p + t\cdot v_1(p) + \cdots + t^k\cdot v_k(p)\]
is a parametrization of $\Gamma_p.$ Denote by $\phi_t:M\to\R^n, t\in\R$ the family of smooth maps defined by
\[\phi_t(x):=x + t\cdot v_1(x) + \cdots + t^k\cdot v_k(x).\]
Proposition~\ref{prop:analyticlimit0} implies
\begin{equation}\label{eq:lim0}
    \lim_{t\to 0}\frac{Vol\left(\left.\phi\right|_{M\times(-t,t)}\right)}{t^{k(m+1)}}=0.
\end{equation}
There exists $\varepsilon>0,$ such that $\phi_t,t\in (-\varepsilon,\varepsilon)$ is a smooth family of embeddings (see \cite[Theorem~1.4]{hirsch2012differential}). Therefore, due to Proposition~\ref{prop:notarbitrary}, Proposition~\ref{prop:contained}, and \eqref{eq:lim0}, $\phi_t(x)\in M$ for $\abs{t}$ small enough. In particular, there exists an open segment $I_p$  of $\Gamma_p$ such that $p\in I_p\subset M.$ Since $M$ is proper, and  since $M$ and $\Gamma_p$ are analytic, the identity theorem for analytic functions \cite[Corollary~1.2.7]{krantz2002primer} implies $\Gamma_p\subset M.$
\end{proof}

\printbibliography

\end{document}